\theoremstyle{plain}
\newtheorem{theorem}{Theorem}
\newtheorem{corollary}[theorem]{Corollary}
\newtheorem{prop}[theorem]{Proposition}
\newtheorem{definition}[theorem]{Definition}
\theoremstyle{remark}
\newtheorem{remark}[theorem]{Remark}
\newtheorem{example}[theorem]{Example}
\begin{document}

\title{On slow decay of Peetre's $K$-functional}

\author{J.~M.~Almira$^*$,  P.~Fern\'{a}ndez-Mart\'{\i}nez}

\keywords{Lethargy results, Rate of convergence, K-functional.}
\thanks{$^*$ Corresponding author}

\maketitle
\begin{abstract}
We characterize when Peetre's $K$-functional slowly decays to zero    and we use this characterization  to demonstrate certain strict inclusions between real interpolation spaces.
\end{abstract}

\markboth{J.~M.~Almira, P.~Fern\'{a}ndez-Mart\'{\i}nez}{Slow decay of Peetre's $K$-functional}

\section{Introduction }
In this paper we consider a couple $(X,Y)$, where $(X,\|\cdot\|_X)$ is a quasi-Banach space and $Y\subset X$ is a quasi-semi-normed space, $(Y, \|\cdot\|_Y)$ which is continuously embedded into $X$.  Under these conditions, we also consider Peetre's  $K$-functional, which is defined by
\[
K(x,t,X,Y)=\inf_{y\in Y} \|x-y\|_X+t\|y\|_Y
\] 
This functional is obviously connected to the approximation properties of the elements of $X$ by elements of $Y$. Concretely, if $Y$ is dense in $X$, then $$K(x,0^+,X,Y)=\lim_{t\to 0^+}K(x,t,X,Y)=0$$ and the claim 
\[ 
K(x,t,X,Y)\geq c 
\]
is equivalent to the following property:
\[
\text{If } y\in Y \text{ satisfies } \|x-y\|_X<c, \text{ then } \|y\|_Y\geq \frac{c-\|x-y\|_X}{t}
\] 
In particular, if $\|x-y\|_X<\frac{c}{2}$ with $y\in Y$, then $\|y\|_Y\geq \frac{c-\|x-y\|_X}{t}\geq \frac{c}{2t}$, which diverges to infinity when $t$ tends to $0$. Let $A$ be a subset of $X$,  we define 
$$K(A,t,X,Y)=\sup_{x\in A}K(x,t,X,Y).$$ 
Thus, if $S(X)$ denotes the unit sphere of $X$, the condition
\begin{equation} \label{condicion}
K(S(X),t,X,Y)>c \text{ for all } t>0
\end{equation}
implies the existence of elements in $X$ of small norm (in particular, of norm $\|x\|_X=1$) which are badly approximable by elements of $Y$ with small norm in $Y$, which is a property that holds true in many cases. For example, if we set $X=C[a,b]$ and $Y=C^{(m)}[a,b]$ with $m\geq 1$, then there are oscillating functions of small uniform norm that cannot be uniformly approximated by functions of small $C^{(m)}$-semi-norm. 

The connection between the $K$-functional and approximation theory is, in fact, a strong one, and there are many results which connect the behavior of this functional to the properties of best approximation errors with respect to an approximation scheme $(X,\{A_n\})$. In particular, the so called Central Theorems in Approximation Theory lead to a proof that, when the approximation scheme $(X,\{A_n\})$ satisfies Jackson's and Bernstein's inequalities with respect to the space $Y\hookrightarrow X$, which are given by 
$$E(y,A_n)\leq Cn^{-r}\|y\|_Y \text{ for all }y \in Y\text{ and all }n\geq 0$$
and
$$\|a\|_Y\leq Cn^r \|a\|_X \text{ for all } a\in A_n,$$
respectively,  then the approximation spaces $A_{q}^{\alpha}(X,\{A_n\})=\{x\in X: \{2^{\alpha k}E(x,A_{2^k})\}\in \ell_q\}$  are completely characterized as interpolation spaces by the formula (see \cite{DvL})
\[
A_q^{\alpha}(X,\{A_n\})= (X,Y)_{\alpha/r,q} \text{ for all } 0<\alpha<r\text{ and  all } 0<q\leq\infty.
\]
Here $(X,Y)_{\theta,q}$ denotes the real interpolation space 
\[
(X,Y)_{\theta,q}=\{x\in X: \rho_{\theta,q}(x)=\| t^{-\left(\theta+\frac{1}{q}\right)}K(x,t,X,Y)\|_{L^q(0,\infty)}<\infty\},
\]
which, in case that $Y\hookrightarrow X$, can be renormalized with the following equivalent quasi-norm:
\[
\|x\|_{\theta,q}=\|\{2^{\theta k}K(x,\frac{1}{2^k},X,Y)\}_{k=0}^{\infty}\|_{\ell_q}.
\]
It follows that, when $(X,\{A_n\})$ satisfies Jackson's and Bernstein's inequalities with respect to a subspace $Y$, the rates of convergence to zero of the sequence of best approximation errors $E(x,A_n)$ and the sequence of evaluations of Peetre's $K$-functional $K(x,t,X,Y)$ at points $1/n$ seem to have similar roles and, in particular, serve to describe the very same subspaces of $X$. Thus, it  is an interesting question to know under which conditions on the couple $(X,Y)$, with $Y\hookrightarrow X$, the $K$-functional $K(\cdot,\cdot,X,Y)$ approaches to zero slowly, not only in presence of an approximation scheme but also for the general case. In this paper we characterize couples whose associated $K$-functional decays to zero slowly. As an application,  we use this characterization to demonstrate that some natural embeddings between real interpolation spaces are strict, a question that was posed by Lions in the decade of 1960 and was solved  by Janson, Nilsson, Peetre and Zafran in 1984 \cite{JNPZ} with some sophisticated tools \cite{Levy}. A complete solution, for the complex method of interpolation, was previously shown by   Stafney in 1970 \cite{S}. Bergh and  L\"{o}fstr\"{o}m  presented a partial solution for the real interpolation method in their famous book  \cite{BL}. Their approach was inspired by Stafney's work. In this paper we study the very same cases by means of less sophisticated tools.

In section 2 of this paper we characterize $K$-functionals which slowly decay to zero as those satisfying \eqref{condicion} and we also characterize them in terms of the approximation properties of $Y$ as a subspace of $X$. In section 3 we demonstrate that for any couple $(X,Y)$ of quasi-Banach spaces such that $Y\hookrightarrow X$, $Y\neq X$, the $K$-functional $K(\cdot,\cdot, X,Y)$ decays to zero slowly and, as a consequence, we prove that, under the very same hypotheses, all interpolation spaces $(X,Y)_{\theta,p}$ are strictly embedded between $X$ and $Y$ and that, if $\theta_1\neq \theta_2$, then $(X,Y)_{\theta_1,p}\neq (X,Y)_{\theta_2,q}$ for all $0<p,q\leq \infty$. Moreover, if $0<\theta<1$ and $0<p,q\leq \infty$ are such that $(X,Y)_{\theta,p}\neq (X,Y)_{\theta,q}$, then we show that the spaces $(X,Y)_{\theta,r}$ with $r\in [p,q]$ are pairwise distinct. The proofs we introduce for these cases are new, elementary, and very natural ones. Moreover, the results we get about $K$-functionals with slow decay are also new and, in our opinion, interesting by themselves.  In the last section of the paper, we show that, if $Y$ is densely embedded into $X$ and $Z\hookrightarrow X$ is a subspace compactly embedded into $X$, then there is a function $\varphi(t)$ satisfying 
$\lim_{t\to 0^+}\varphi(t)=0$ such that $K(z,t,X,Y)\leq \varphi(t)\|z\|_Z$ for all $z\in Z$. Thus, all elements of $Z$ share a common decay to zero in their $K$-functionals.

\section{Characterizations of slowly decaying $K$-functionals}

The importance of condition \eqref{condicion}  comes from the following result: 

\begin{theorem}\label{uno} The following are equivalent claims:
\begin{itemize}
\item[$(a)$]  $K(S(X),t,X,Y)>c$ for all  $t>0$ and a certain constant $c>0$.
\item[$(b)$] For every non-increasing sequences $\{\varepsilon_n\},\{t_n\}\in c_0$, there are elements $x\in X$ such that 
\[
K(x,t_n,X,Y)\neq \mathbf{O}(\varepsilon_n)
\]
\end{itemize}
\end{theorem}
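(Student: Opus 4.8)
The plan is to prove both implications directly from the definitions. For the direction $(a)\Rightarrow(b)$, fix non-increasing null sequences $\{\varepsilon_n\},\{t_n\}$. The idea is to build a "bad" element $x$ as a rapidly convergent series $x=\sum_{k} \lambda_k x_k$, where each $x_k\in S(X)$ is chosen so that $K(x_k,t_{n_k},X,Y)>c$ for a suitably fast-growing subsequence $\{n_k\}$, and the coefficients $\lambda_k$ decay fast enough that the tail $\sum_{j>k}\lambda_j x_j$ is negligible in $X$ while the partial sum $\sum_{j<k}\lambda_j x_j$ lives in $Y$ with controlled $Y$-norm (this is where I would use that $K(x_j,t,X,Y)<\infty$, or a density/approximation argument, to replace each $x_j$ by an element of $Y$ up to small $X$-error). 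Then for infinitely many $k$ one estimates $K(x,t_{n_k},X,Y)\geq \lambda_k K(x_k,t_{n_k},X,Y) - (\text{tail error in }X) - t_{n_k}(\text{head }Y\text{-norm}) \gtrsim \lambda_k c$, and the point is to arrange $\lambda_k/\varepsilon_{n_k}\to\infty$, which forces $K(x,t_n,X,Y)\neq\mathbf{O}(\varepsilon_n)$. One must be careful with the quasi-triangle inequality constants in $X$ (and the quasi-semi-norm on $Y$), so the series construction needs the $\lambda_k$ to beat the powers of the quasi-norm constant as well.

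For the contrapositive of $(b)\Rightarrow(a)$: suppose $(a)$ fails, i.e. for every $c>0$ there is $t>0$ with $K(S(X),t,X,Y)\leq c$. Equivalently, there is a null sequence $\{t_n\}$ with $\delta_n:=K(S(X),t_n,X,Y)\to 0$; passing to a subsequence we may take $\{t_n\}$ and $\{\delta_n\}$ non-increasing and in $c_0$. By homogeneity, $K(x,t_n,X,Y)\leq \delta_n\|x\|_X$ for all $x\in X$. Now choose $\varepsilon_n$ to decay slower than $\delta_n$ — for instance $\varepsilon_n=\sqrt{\delta_n}$ (again taken non-increasing and null) — so that for every $x\in X$ we get $K(x,t_n,X,Y)\leq \delta_n\|x\|_X = o(\varepsilon_n)$, hence $K(x,t_n,X,Y)=\mathbf{O}(\varepsilon_n)$ for all $x$. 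This contradicts $(b)$, which asserts the existence of $x$ with $K(x,t_n,X,Y)\neq\mathbf{O}(\varepsilon_n)$ for these particular sequences. Hence $(b)\Rightarrow(a)$.

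The main obstacle is the construction in $(a)\Rightarrow(b)$: controlling the $Y$-semi-norm of the partial sums while simultaneously keeping the $X$-norm of the tails small, all inside quasi-normed (not normed) spaces. The key quantitative gadget is a lemma extracting, for each $x_k\in S(X)$ with $K(x_k,t_{n_k},X,Y)>c$, an element $y_k\in Y$ with $\|x_k-y_k\|_X$ as small as we like — this is where condition \eqref{condicion} gets used in the strong form "norm-one badly approximable elements exist" — together with a diagonalization choosing $n_k$ large enough (depending on $\varepsilon$, on $\|y_k\|_Y$, and on the quasi-norm constant of $X$) that all the error terms at scale $t_{n_k}$ are $\leq \tfrac{1}{2}\lambda_k c$. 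Once the bookkeeping of these nested smallness requirements is set up correctly, the estimate $K(x,t_{n_k},X,Y)\gtrsim \lambda_k c$ along the subsequence falls out, and slow decay relative to $\{\varepsilon_n\}$ follows from the choice $\lambda_k\gg\varepsilon_{n_k}$.
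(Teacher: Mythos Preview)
Your contrapositive for $(b)\Rightarrow(a)$ is the paper's argument, except that you take $\varepsilon_n=\sqrt{\delta_n}$ where the paper simply sets $\varepsilon_n=\delta_n$: since $K(x,t_n,X,Y)\le \delta_n\|x\|_X$ already gives $K(x,t_n,X,Y)=\mathbf{O}(\delta_n)$ with constant $\|x\|_X$, the square root is unnecessary.

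For $(a)\Rightarrow(b)$ your route is genuinely different. The paper argues by contradiction via the Baire category theorem: assuming $K(x,t_n,X,Y)=\mathbf{O}(\varepsilon_n)$ for every $x$, it writes $X=\bigcup_m \Gamma_m$ with $\Gamma_m=\{x:K(x,t_n,X,Y)\le m\varepsilon_n\text{ for all }n\}$, obtains an interior point of some $\Gamma_{m_0}$, uses the symmetry $\Gamma_m=-\Gamma_m$ and quasi-convexity to place a ball $B_X(0,r)$ inside $\Gamma_{Cm_0}$, and concludes $K(x,t_n,X,Y)\le (Cm_0/r)\|x\|_X\,\varepsilon_n$ uniformly in $x$, contradicting $(a)$. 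This is short and non-constructive, using completeness of $X$ only through Baire. Your approach is constructive: it manufactures an explicit $x=\sum_k\lambda_k x_k$ witnessing failure of $\mathbf{O}(\varepsilon_n)$. The payoff is an actual element; the cost is the inductive bookkeeping you describe (tails in $X$, heads in $Y$, powers of the quasi-norm constant, the diagonal choice of $n_k$).

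One point in your sketch needs care. To control the head you want to replace each $x_j\in S(X)$ by some $y_j\in Y$ with $\|x_j-y_j\|_X$ arbitrarily small. Neither condition $(a)$ nor the mere finiteness of $K(x_j,t,X,Y)$ delivers this; what is needed is that $Y$ be dense in $X$. Fortunately, if $Y$ is \emph{not} dense then $(b)$ is immediate: any $x\in X\setminus\overline{Y}$ satisfies $K(x,t,X,Y)\ge d(x,Y)>0$ for all $t$, so $K(x,t_n,X,Y)\ne\mathbf{O}(\varepsilon_n)$ for every null sequence $\{\varepsilon_n\}$. Split off this trivial case first; in the dense case your series construction goes through (completeness of $X$ is then used to make $\sum_k\lambda_k y_k$ converge).
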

\begin{proof} $(a)\Rightarrow (b)$.  Let us assume, on the contrary, that $K(x,t_n,X,Y)= \mathbf{O}(\varepsilon_n)$ for all $x\in X$ and certain sequences $\{\varepsilon_n\},\{t_n\}\in c_0$. This can be reformulated as $$X=\bigcup_{m=1}^{\infty} \Gamma_m,$$ where $$\Gamma_m=\{x\in X: K(x,t_n,X,Y)\leq m\varepsilon_n \text{ for all }n\in\mathbb{N}\}.$$
Now, $\Gamma_m$ is a closed subset of $X$ for all $m$ and Baire category theorem implies that $\Gamma_{m_0}$ has nonempty interior for some $m_0\in\mathbb{N}$. On the other hand, $\Gamma_m=-\Gamma_m$ since $K(x,t,X,Y)=K(-x,t,X,Y)$ for all $x\in X$ and $t\geq 0$. Furthermore, if $C>1$ is the quasi-norm constant of $X$ and $Y$, then
\[
\textbf{conv}(\Gamma_m)\subseteq \Gamma_{Cm},
\]
since, if $x,y\in \Gamma_m$ and $\lambda\in [0,1]$, then 
\begin{eqnarray*}
K(\lambda x +(1-\lambda)y ,t_n,X,Y) &\leq&  C(K(\lambda x ,t_n,X,Y)+ K( (1-\lambda)y ,t_n,X,Y))\\
&=&  C(\lambda  K(x ,t_n,X,Y)+(1-\lambda) K( y ,t_n,X,Y)) \\
&\leq&  C(\lambda  m\varepsilon_n+(1-\lambda) m\varepsilon_n) \\
&=&  C m\varepsilon_n 
\end{eqnarray*} 
Thus, if $B_X(x_0,r)=\{x\in X:\|x_0-x\|_X<r\}\subseteq \Gamma_{m_{0}}$, then $\textbf{conv}(B_X(x_0,r)\cup B_X(-x_0,r))\subseteq \Gamma_{Cm_0}$. In particular, 
\[
\frac{1}{2}(B_X(x_0,r)+B_X(-x_0,r))\subseteq \Gamma_{Cm_0}.
\]
Now, it is clear that 
\[
B_X(0,r)\subseteq \frac{1}{2}(B_X(x_0,r)+B_X(-x_0,r)),
\]
since, if $x\in B_X(0,r)$, then $\|-x_0-(x+x_0)\|=\|x_0-(x-x_0)\|=\|x\|_X=\leq r$, so that  $x-x_0\in B_X(x_0,r)$, $x+x_0\in B_X(x_0,r)$, and $x=\frac{1}{2}((x-x_0)+(x+x_0))$. Thus, 
\[
B_X(0,r)\subseteq  \Gamma_{Cm_0}.
\]
This means that, if $x\in X\setminus \{0\}$, then 
\[
K(\frac{rx}{\|x\|},t_n,X,Y)\leq Cm_0\varepsilon_n \text{ for all } n=1,2,\cdots
\]
Hence 
\[
K(x,t_n,X,Y)\leq \frac{\|x\|}{r}Cm_0\varepsilon_n \text{ for all } n=1,2,\cdots
\]
On the other hand, $(a)$ implies that, for each $n$, there is $x_n\in S(X)$ such that $K(x_n,t_n,X,Y)>c$, so that 
\[
c< K(x_n,t_n,X,Y)\leq  \frac{1}{r}Cm_0\varepsilon_n \text{ for all } n=1,2,\cdots,
\] 
which is impossible, since $\varepsilon_n$ converges to $0$ and $c>0$. This proves $(a)\Rightarrow (b)$. 

Let us demonstrate the other implication. Assume that $(a)$ does not hold. Then there are non-increasing sequences $\{t_n\}, \{c_n\}\in c_0$ such that 
\[
K(S(X),t_n,X,Y)\leq c_n \text{ for all } n\in\mathbb{N}
\]
In particular, if $x\in X$ is not the null vector, then
\[
K(\frac{x}{\|x\|_X},t_n,X,Y)\leq K(S(X),t_n,X,Y)\leq c_n \text{ for all } n\in\mathbb{N},
\]
so that 
\[
K(x,t_n,X,Y)=\|x\|_X K(\frac{x}{\|x\|_X},t_n,X,Y)\leq \|x\|_X c_n \text{ for all } n\in\mathbb{N},
\]
and $K(x,t_n,X,Y)=\mathbf{O}(c_n)$ for all $x\in X$. This proves $(b)\Rightarrow (a)$. 
\end{proof}

\begin{definition} We say that the $K$-functional $K(\cdot,\cdot, X,Y)$ slowly decays to zero if either condition $(a)$ or $(b)$ of Theorem \ref{uno} hold true. 
\end{definition}

\begin{remark} Note that, if $Y\subseteq X$ and the $K$-functional $K(\cdot,\cdot, X,Y)$ decays to zero slowly, then $(X,Y)_{\theta,q}$ is properly contained into $X$ for all $0< \theta< 1$ and all $0<q\leq \infty$. 
\end{remark}

Next proposition, whose proof is omitted, states that if the elements of $S(X)$ are not easily approximated by elements of $Y$,  they cannot  be easily approximated by elements of any other subspace $Y_{1} \hookrightarrow Y$. 
\begin{prop}  Assume that  $Y$ is densely embedded into $X$ and $K(S(X),t,X,Y)>c$ for all  $t>0$ and a certain constant $c>0$. Then:
\begin{itemize}
\item[(a)] If $Y_1\hookrightarrow Y$ is another quasi-semi-normed space continuously embedded into $Y$, then  $K(S(X),t,X,Y_1)>c$ for all $t>0$. 
\item[(b)] If $Y_1\hookrightarrow Y$ is another quasi-semi-normed space continuously embedded into $Y$, then 
for every every non-increasing sequences $\{\varepsilon_n\},\{t_n\}\in c_0$, there are elements $x\in X$ such that 
\[
K(x,t_n,X_1,Y_1)\neq \mathbf{O}(\varepsilon_n) .
\]
\end{itemize}
\end{prop}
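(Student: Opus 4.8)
The plan is to reduce both parts to a single monotonicity estimate for the $K$-functional under replacement of the small space, combined with Theorem~\ref{uno}. First I would record the elementary comparison: since $Y_1$ is continuously embedded into $Y$, there is a constant $M>0$ with $\|y\|_Y\le M\|y\|_{Y_1}$ for all $y\in Y_1$; hence, for every $x\in X$ and every $t>0$,
\[
K(x,t,X,Y_1)=\inf_{y\in Y_1}\bigl(\|x-y\|_X+t\|y\|_{Y_1}\bigr)\ \ge\ \inf_{y\in Y_1}\Bigl(\|x-y\|_X+\tfrac{t}{M}\|y\|_Y\Bigr)\ \ge\ K\!\left(x,\tfrac{t}{M},X,Y\right),
\]
the last inequality holding because the feasible set $Y_1$ is contained in $Y$, so enlarging it can only decrease the infimum.

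Part (a) is then immediate: taking the supremum over $x\in S(X)$ gives $K(S(X),t,X,Y_1)\ge K(S(X),t/M,X,Y)>c$, and since $t\mapsto t/M$ is a bijection of $(0,\infty)$ onto itself, this holds for every $t>0$ with the \emph{same} constant $c$. It is precisely the fact that hypothesis \eqref{condicion} is assumed for all $t>0$ that makes the time rescaling harmless.

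For part (b), I would observe that $Y_1\hookrightarrow Y\hookrightarrow X$ makes $(X,Y_1)$ a couple of the kind to which Theorem~\ref{uno} applies verbatim (here the symbol $X_1$ appearing in the statement is evidently a misprint for $X$, since no space $X_1$ is introduced). By part (a), the couple $(X,Y_1)$ satisfies condition $(a)$ of Theorem~\ref{uno} with the constant $c$, and therefore it satisfies condition $(b)$ of that theorem, which is exactly the assertion to be proved: for all non-increasing $\{\varepsilon_n\},\{t_n\}\in c_0$ there is $x\in X$ with $K(x,t_n,X,Y_1)\neq\mathbf{O}(\varepsilon_n)$.

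I do not expect any genuine obstacle: the whole argument is a two-line estimate followed by an invocation of Theorem~\ref{uno}. The only points that require a moment's attention are the direction of the infimum comparison when passing from $Y_1$ to the larger space $Y$, and the bookkeeping showing that the rescaling $t\mapsto t/M$ leaves the threshold $c$ — and hence condition \eqref{condicion} for the couple $(X,Y_1)$ — intact.
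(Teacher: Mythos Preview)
Your argument is correct and is exactly the natural one: the comparison $K(x,t,X,Y_1)\ge K(x,t/M,X,Y)$ obtained from $\|y\|_Y\le M\|y\|_{Y_1}$ together with $Y_1\subseteq Y$ gives part~(a) immediately, and part~(b) then follows from Theorem~\ref{uno} applied to the couple $(X,Y_1)$ (your reading of $X_1$ as a misprint for $X$ is surely right). The paper itself omits the proof of this proposition, so there is nothing to compare against; your write-up would serve perfectly well as the missing argument, and you are also right that the density hypothesis on $Y$ plays no role in either part.
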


\begin{remark} \label{pro1} Assume that $\|\cdot\|_{*,X}$ is an equivalent quasi-norm of $X$ and  $\|\cdot\|_{*,Y}$ is an equivalent quasi-semi-norm of $Y$, and let us denote by $$K^*(x,t,X,Y)=\inf_{y\in Y}\|x-y\|_{*,X}+t\|y\|_{*,Y}$$ the $K$-functional associated to the couple $(X,Y)$ when doted with the norms $\|\cdot\|_{*,X}$ and $\|\cdot\|_{*,Y}$, respectively. Then there are two constants $M,N>0$ such that $$M\cdot K^*(x,t,X,Y)\leq K(x,t,X,Y)\leq N\cdot K^*(x,t,X,Y)$$
In particular, if $S^*(X)=\{x\in X: \|x\|_{*,X}=1\}$, the following are equivalent claims:
\begin{itemize}
\item[$(a)$]  $K^*(S^*(X),t,X,Y)>c^*$ for all  $t>0$ and a certain constant $c^*>0$. 
\item[$(b)$] $K(S(X),t,X,Y)>c$ for all  $t>0$ and a certain constant $c>0$.
\end{itemize}
\end{remark}
Let us now characterize, in terms of the approximation properties of $Y$ as a subspace of $X$, the couples 
$(X,Y)$ whose associated $K$-functional slowly decays to zero:
 
\begin{theorem}\label{dos} The following are equivalent claims:
\begin{itemize}
\item[$(a)$]  $K(S(X),t,X,Y)>c$ for all  $t>0$ and a certain constant $c>0$.
\item[$(b)$]  For a certain constant $\delta>0$ there exist sequences $\{x_n\}\subset S(X)$ and $\{b_n\}\subset ]0,\infty[$, $\lim b_n= \infty$, such that, for every $n\in\mathbb{N}$,  
\[
\|x_n-y\|_X<\delta \text{ and } y \in Y \text{ imply that } \|y\|_Y\geq b_n
\]
\end{itemize}
\end{theorem}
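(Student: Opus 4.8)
The plan is to derive both implications directly from the definition of the $K$-functional, using nothing beyond the elementary estimates already displayed in the Introduction; indeed, condition $(b)$ is essentially a uniform, sequential restatement of the equivalence recorded there between the bound $K(x,t,X,Y)\geq c$ and the property ``$\|x-y\|_X<c$ forces $\|y\|_Y\geq(c-\|x-y\|_X)/t$''. The quasi-norm constants play no role here, since in $\inf_{y\in Y}\|x-y\|_X+t\|y\|_Y$ the symbol $+$ is ordinary addition of reals.

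For $(a)\Rightarrow(b)$ I would fix any sequence $t_n\to 0^+$ and use $(a)$ to select $x_n\in S(X)$ with $K(x_n,t_n,X,Y)>c$. Put $\delta=c/2$ and $b_n=c/(2t_n)$, so that $b_n\to\infty$. If $y\in Y$ satisfies $\|x_n-y\|_X<\delta$, then, bounding the infimum by the value at this particular $y$,
\[
c<K(x_n,t_n,X,Y)\le \|x_n-y\|_X+t_n\|y\|_Y<\delta+t_n\|y\|_Y,
\]
whence $\|y\|_Y>(c-\delta)/t_n=b_n$, which is exactly the asserted implication.

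For $(b)\Rightarrow(a)$ I would fix $t>0$ and bound $K(x_n,t,X,Y)$ from below by distinguishing two cases for an arbitrary competitor $y\in Y$. If $\|x_n-y\|_X\ge\delta$ then $\|x_n-y\|_X+t\|y\|_Y\ge\delta$; if $\|x_n-y\|_X<\delta$ then hypothesis $(b)$ gives $\|y\|_Y\ge b_n$, hence $\|x_n-y\|_X+t\|y\|_Y\ge tb_n$. Taking the infimum over $y$ yields $K(x_n,t,X,Y)\ge\min(\delta,tb_n)$. Since $b_n\to\infty$, we may choose $n$ with $tb_n\ge\delta$, and then $K(S(X),t,X,Y)\ge K(x_n,t,X,Y)\ge\delta$. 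As $t>0$ was arbitrary, $(a)$ holds with, e.g., $c=\delta/2$.

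I do not anticipate a genuine obstacle; the only point to watch is that the lower bound $\delta$ obtained in $(b)\Rightarrow(a)$ is independent of $t$, which is precisely what allows $(a)$ to be recovered with a single constant. (One may also note that $(b)$ automatically forces $\delta\le 1$, since $y=0\in Y$ satisfies $\|x_n-0\|_X=1$ and $\|0\|_Y=0$; this observation is not needed in the argument.)
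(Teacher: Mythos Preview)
Your proof is correct and follows essentially the same route as the paper. The $(a)\Rightarrow(b)$ direction is identical (same choice $\delta=c/2$, $b_n=c/(2t_n)$); for $(b)\Rightarrow(a)$ the paper evaluates at the specific points $t=1/b_n$, argues by contradiction that $K(x_n,1/b_n,X,Y)\ge\delta$, and then invokes monotonicity of $K$ in $t$, whereas you give the equivalent direct lower bound $K(x_n,t,X,Y)\ge\min(\delta,tb_n)$ via a two-case split---a cosmetic rather than a substantive difference.
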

\begin{proof}
Note that we can assume $0<c,\delta<1$ without loss of generality.
Assume $(a)$ and take, for each $t>0$, an element $x_t\in S(X)$ such that $K(x_t,t,X,Y)>c$ then we have already shown that if $\|x_t-y_t\|_X<\frac{c}{2}$ with $y_t\in Y$, then $\|y_t\|_Y\geq \frac{c-\|x_t-y_t\|_X}{t}\geq \frac{c}{2t}$. Thus, $(b)$ holds for $\delta=c/2$ and $b_n=\frac{c}{2t_n}$ for every decreasing sequence $\{t_n\}\in c_0$. This proves $(a)\Rightarrow (b)$.  
  
Let us now assume that $K(x_n,\frac{1}{b_n},X,Y)<\delta$ with $\{x_n\}$, $\delta$ and $\{b_n\}$ satisfying $(b)$. Then there is $y_n\in Y$ such that $\|x_n-y_n\|_X+\frac{1}{b_n}\|y_n\|_Y<\delta$, which leads to a contradiction since $\|x_n-y_n\|_X<\delta$ and $y_n\in Y$ imply $\|y_n\|_Y\geq b_n$, so that 
\[
1>\delta> \|x_n-y_n\|_X+\frac{1}{b_n}\|y_n\|_Y>1.
\]
It follows that $K(S(X),\frac{1}{b_n},X,Y)\geq \delta$ for all $n$ and $(a)$ follows with $c=\delta$ from the monotonicity of $K$ and the fact that $\lim_{n\to\infty}\frac{1}{b_n}=0$. 
\end{proof}

Theorem \ref{dos} reveals an easy way to confirm that Peetre's $K$-functional decays to zero slowly, since condition $(b)$ of this theorem is, in many cases, easy to verify. We include here just a few examples to show the way  this theorem can be used.

\begin{example} Set $X=C[a,b]$ with the uniform norm $\|f\|_{C[a,b]}=\sup_{t\in [a,b]}|f(t)|$ and $Y=C^{(1)}[a,b]$ with the seminorm $\|g\|_{C^{(1)}[a,b]}=\|g'\|_{C[a,b]}$. 
Take $\alpha_n=\frac{a+b}{2}-\frac{1}{2n}, \beta_n=\frac{a+b}{2}+\frac{1}{2n}$ and set
$$f_n(x)=\left\{ \begin{array}{llll}  -1 & x\in [a,\alpha_n] \\
\frac{2}{\beta_n-\alpha_n}x-\frac{\alpha_n+\beta_n}{\beta_n-\alpha_n} & x\in [\alpha_n,\beta_n]\\
1 & x\in [\beta_n,b]
\end{array}
\right.  $$
Then $\|f_n\|_{C[a,b]}=1$ for all $n$ and, if $\|f_n-g\|_{C[a,b]}<\frac{1}{2}$ with $g\in C^{(1)}[a,b]$ then $g(\alpha_n)<-1/2$ and $g(\beta_n)>1/2$ so that, there is a point $\rho_n\in[\alpha_n,\beta_n]\subseteq [a,b]$ such that 
$$g'(\rho_n)=\frac{g(\beta_n)-g(\alpha_n)}{\beta_n-\alpha_n}\geq \frac{1}{1/n}=n$$
and condition $(b)$ of Theorem \ref{dos} holds true with $\delta=\frac{1}{2}$ and $b_n=n$. Note that this example also works when we consider on $Y=C^{(1)}[a,b]$ the norm $\|g\|_1= \|g\|_{C[a,b]}+\|g'\|_{C[a,b]}$. 
\end{example}
\begin{example} Given $0<p<q\leq \infty$ we set $X=\ell_q$ with the standard norm (or quasi-norm) $\|\{a_n\}\|_{q}=\left(\sum_{n=0}^{\infty}|a_n|^q\right)^{\frac{1}{q}}$, for $q<\infty$, and $X=c_0$ with the supremum norm $\|\{a_n\}\|_{\infty}=\sup_{n}|a_n|$, if $q=\infty$. We also set $Y=\ell_p$ with the standard norm (or quasi-norm). Take $\mathbf{a}_n=\{a_{k}^n\}_{k=0}^{\infty}$, where $a_k^n=1$ for $k=0,1,\cdots, 2n-1$ and $a_k^n=0$ for $k\geq 2n$. If $\|\mathbf{a}_n-\mathbf{b}\|_{\infty}<\frac{1}{2}$ (analogously if  $\|\mathbf{a}_n-\mathbf{b}\|_{q}<\frac{1}{2}$), with $\mathbf{b}=\{b_k\}\in\ell_p$, then $b_k>\frac{1}{2}$ for all $k\in \{0,1,\cdots,2n-1\}$, which implies that $\|\mathbf{b}\|_p\geq 2^{-1+\frac{1}{p}}n^{\frac{1}{p}}$. Thus, condition $(b)$ of Theorem \ref{dos} holds true in all  these cases with $\delta=\frac{1}{2}$ and $b_n=2^{-1+\frac{1}{p}}n^{\frac{1}{p}}$
\end{example}

\begin{example} If we do not impose $Y\hookrightarrow  X$ but we maintain $Y\subseteq X$ (i.e., we do not require continuity of the inclusion) then it is easy to find a couple $(X,Y)$ whose $K$-functional does not decay to zero slowly. Concretely, we can take $X=\ell_1$ with its natural norm, $\|\cdot\|_1$, and $Y=c_{00}=\{\text{ finite sequences }\}$ with the norm of supremum , $\|\cdot\|_{\infty}$.  Obviously $Y\subset X$. Take $(a_n)\in S(\ell_1)$ and take $\delta>0$ arbitrary. Then there is a number $N_0\in\mathbb{N}$ such that $\sum_{n=N_0+1}^{\infty}|a_n|<\delta$. Take $(b_n)\in c_{00}$ given by $b_k=a_k$ for $k\leq N_0$ and $b_k=0$ otherwise. Then 
\[
K((a_n),t,\ell_1,c_{00})\leq \|(a_n)-(b_n)\|_1+t\|(b_n)\|_{\infty}\leq \delta+t,
\]
and this holds for every $\delta>0$, so that 
\[
K((a_n),t,\ell_1,c_{00})\leq t
\]
and this $K$-functional does not decay to zero slowly.

\end{example}

\section{The Quasi-Banach setting}

The following result shows that, under very mild conditions on the couple $(X,Y)$, the $K$-functional 
$K(\cdot,\cdot, X,Y)$ slowly decays to zero.

\begin{theorem} \label{fundamental}
Assume that $(X,\|\cdot\|_{X})$ and $(Y,\|\cdot\|_Y)$ are quasi-Banach spaces. Then
\begin{itemize}
\item[$(a)$] If $Y\hookrightarrow X$, $Y\neq X$ (i.e., $Y$ is properly embedded into $X$). Then $K(S(X),t,X,Y)>c$ for all $t>0$ and a certain constant $c>0$. 
\item[$(b)$] If $X$ and $Y$ are both $p$-normed spaces, $Y\hookrightarrow X$, then either $K(S(X),t,X,Y)=1$ for all $t>0$, or $X=Y$. 
\end{itemize}
\end{theorem}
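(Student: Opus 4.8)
The plan is to prove both parts by contraposition, through an iterated (greedy) approximation that converts fast decay of the $K$-functional into a telescoping series which converges \emph{in $Y$}. Suppose that for some fixed $t_0>0$ one has $K(S(X),t_0,X,Y)\le\theta$ with $\theta<1$; by homogeneity of $K$ in its first argument this means $K(x,t_0,X,Y)\le\theta\|x\|_X$ for every $x\in X$. Fix $\rho$ with $\theta<\rho<1$. Given $x\in X$, I would build $(v_n)\subset Y$ recursively: set $u_0=x$ and, having $u_{n-1}$, choose $v_n\in Y$ (take $v_n=0$ if $u_{n-1}=0$) with $\|u_{n-1}-v_n\|_X+t_0\|v_n\|_Y\le\rho\|u_{n-1}\|_X$, then put $u_n=u_{n-1}-v_n$. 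Dropping one term at a time gives $\|u_n\|_X\le\rho^n\|x\|_X$ and $\|v_n\|_Y\le t_0^{-1}\rho^n\|x\|_X$, so the partial sums $s_n=v_1+\dots+v_n=x-u_n$ converge to $x$ in $X$.

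The heart of the matter is that $(s_n)$ is Cauchy in $Y$. For part $(b)$, where $Y$ is $p$-normed, $\|s_m-s_n\|_Y^{p}\le\sum_{k>n}\|v_k\|_Y^{p}\le t_0^{-p}\|x\|_X^{p}\sum_{k>n}\rho^{kp}\to0$ since $\rho<1$; completeness of $Y$ gives $s_n\to s\in Y$, and $Y\hookrightarrow X$ forces $s_n\to s$ in $X$ as well, whence $x=s\in Y$. Thus a single value $t_0$ with $K(S(X),t_0,X,Y)<1$ already forces $X=Y$; since $K(S(X),t,X,Y)\le\|x\|_X=1$ always, this is exactly the dichotomy in $(b)$.

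For part $(a)$ the same construction applies, but in the sum one picks up the quasi-norm constant $C\ge1$ of $Y$: $\|s_m-s_n\|_Y\le\sum_{k>n}C^{k-n}\|v_k\|_Y\le t_0^{-1}\|x\|_X\sum_{k>n}C^{k-n}\rho^{k}$, which tends to $0$ provided $C\rho<1$. So assume $(a)$ fails, i.e.\ there is no $c>0$ with $K(S(X),t,X,Y)>c$ for all $t$; since $t\mapsto K(S(X),t,X,Y)$ is non-decreasing, this forces $\lim_{t\to0^+}K(S(X),t,X,Y)=0$, and I may pick $t_0$ with $K(S(X),t_0,X,Y)<1/(2C)$ and then $\rho\in(K(S(X),t_0,X,Y),1/C)$. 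The iteration now shows $x\in Y$ for every $x\in X$, i.e.\ $X=Y$, contradicting $Y\ne X$. (Alternatively, $(a)$ follows from $(b)$: by Aoki--Rolewicz, $X$ and $Y$ carry equivalent $p$-norms for a common $p\in(0,1]$; applying $(b)$ to these and invoking Remark \ref{pro1} yields $K(S(X),t,X,Y)>c$ for all $t$.)

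I expect the main obstacle to be precisely the convergence of $\sum v_n$ in $Y$. A naive argument would only produce a $\|\cdot\|_Y$-bounded sequence approximating $x$ in $X$, and in the quasi-Banach category there is no weak compactness to promote that to convergence; what rescues the argument is that the term $t_0\|v_n\|_Y$ in the $K$-functional makes the corrections geometrically small in $Y$. This also explains why in $(a)$ one needs $K(S(X),t_0,X,Y)$ genuinely small (so that $C\rho<1$), a smallness that monotonicity of the $K$-functional in $t$ hands us for free. The only other point to keep straight is the routine use of Hausdorffness of $X$ to identify the two limits of $(s_n)$.
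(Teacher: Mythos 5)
Your proof is correct. For part (b) it is essentially the paper's own argument: the same greedy iteration turning $K(x,t_0,X,Y)\le\theta\|x\|_X$, $\theta<1$, into a decomposition $x=u_n+(v_1+\cdots+v_n)$ with geometric decay, the $p$-triangle inequality making the partial sums Cauchy in $Y$, and completeness of $Y$ together with $Y\hookrightarrow X$ forcing $x\in Y$. Where you genuinely differ is part (a): the paper proves (a) only by reduction to (b), invoking Aoki--Rolewicz to replace the quasi-norms by equivalent $p$-norms with a common $p$ and then Remark \ref{pro1} (invariance of the slow-decay property under equivalent renorming), whereas your primary route runs the iteration directly in the original quasi-norms. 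That variant is sound: the negation of (a), together with monotonicity of $t\mapsto K(S(X),t,X,Y)$, yields some $t_0$ with $K(S(X),t_0,X,Y)<1/(2C)$, so you may take $C\rho<1$, and then the iterated quasi-triangle estimate $\|v_{n+1}+\cdots+v_m\|_Y\le\sum_{k>n}C^{k-n}\|v_k\|_Y\le t_0^{-1}\|x\|_X\,C\rho^{n+1}/(1-C\rho)$ shows the partial sums are Cauchy in $Y$ despite the exponentially growing constants. What each approach buys: yours gives a self-contained proof of (a) with no renorming machinery, at the cost of needing the $K$-functional genuinely small (below $1/C$) rather than merely below $1$; the paper's reduction (which you also record as an alternative) gets (a) for free from the sharper dichotomy in (b), where after renorming a single value $K(S(X),t_0,X,Y)<1$ already forces $X=Y$.
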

\begin{proof}
It follows from Aoki-Rolewicz Theorem (see \cite[Theorem 2.1.1]{DvL} or \cite[pages 7-8]{KPR}) that $X$ and $Y$ can be renormed with equivalent quasi-norms in such a way that they become $p$-normed spaces for a certain $p\in ]0,1]$ (indeed, Aoki-Rolewicz gives two equivalent quasi-norms $\|\cdot\|_{*,X}$ and $\|\cdot\|_{*,Y}$, respectively and two numbers $p_X,p_Y\in ]0,1]$ such that 
$\|x_1+x_2\|_{*,X}^{p_X}\leq \|x_1\|_{*,X}^{p_X}+\|x_2\|_{*,X}^{p_X}$ and $\|y_1+y_2\|_{*,Y}^{p_Y}\leq \|y_1\|_{*,Y}^{p_Y}+\|y_2\|_{*,Y}^{p_Y}$ for all $x_1,x_2\in X$ and all $y_1,y_2\in Y$. Thus, if we set $p=\min\{p_X,p_Y\}$ then  
$\|x_1+x_2\|_{*,X}^{p}\leq \|x_1\|_{*,X}^{p}+\|x_2\|_{*,X}^{p}$ and $\|y_1+y_2\|_{*,Y}^{p}\leq \|y_1\|_{*,Y}^{p}+\|y_2\|_{*,Y}^{p}$ for all $x_1,x_2\in X$ and all $y_1,y_2\in Y$ and both spaces are $p$-normed with the very same $p$. It follows from this and from Proposition \ref{pro1} that we only need to demonstrate part $(b)$ of the Theorem. 
  
Thus, we assume that $X$, $Y$ are quasi-Banach $p$-normed spaces and  $Y\hookrightarrow X$ and we want to demonstrate that, if $K(S(X),t,X,Y)<1$ for a certain $t>0$, then $X=Y$. 
Assume, on the contrary, that  $K(S(X),t_0,X,Y)=c<1$ and $X\neq Y$. Take $\rho\in ]0,1[$ such that $c<\rho^{1/p}<1$. Then $K(\frac{x}{\|x\|_X},t_0,X,Y)<\rho^{1/p}$ for every $x\in X$, $x\neq 0$. Hence every element $x$ of $X$ which is different from zero satisfies $$K(x,t_0,X,Y)<\rho^{1/p}\|x\|_X,$$
which implies that $\|x-y_0\|_X+t_0\|y_0\|_Y<\rho^{1/p}\|x\|_X$ for certain $y_0\in Y$. Thus, if we set $x_0=x-y_0$, we have that 
\begin{equation} \label{argumento}
\left\{\begin{array}{llllll} x =x_0+y_0 \text{ with } x_0\in X, \text{ and } y_0\in Y \\
\|x_0\|_X< \rho^{1/p}\|x\|_X \\
\|y_0\|_Y< t_0^{-1}\rho^{1/p}\|x\|_X \\
\end{array} \right.
\end{equation}
Let us take $x\in X\setminus Y$ and apply \eqref{argumento} to this concrete element. We can repeat the argument just applying it to $x_0$ (if $x_0=0$ then $x=y_0\in Y$, which contradicts our assumption). Hence, there are elements $x_1\in X$ and $y_1\in Y$ such that 
\begin{eqnarray*} 
\left\{\begin{array}{llllll} x_0 =x_1+y_1 \text{ with } x_1\in X, \text{ and } y_1\in Y \\
\|x_1\|_X< \rho^{1/p}\|x_0\|_X < (\rho^{1/p})^2\|x\|_X \\
\|y_1\|_Y< t_0^{-1}(\rho^{1/p})^2\|x\|_X \\
\end{array} \right.
\end{eqnarray*}
Moreover, $x=x_0+y_0=x_1+y_1+y_0$. Again $x_1\neq 0$ since $x\not\in Y$. We can repeat the argument $m$ times to get a decomposition $x=x_m+y_m+\cdots+y_0$ with $x_m\in X$, $x_m\neq 0$, $y_k\in Y$ for all $0\leq k\leq m$ and 
\begin{equation*} 
\left\{\begin{array}{llllll} \|x_m\|_X< (\rho^{1/p})^{m+1}\|x\|_X \\
\|y_k\|_Y< t_0^{-1}(\rho^{1/p})^{k+1}\|x\|_X \text{ for all } 0\leq k\leq m .\\
\end{array} \right. 
\end{equation*}
Let us set $z_m=x-x_m=y_0+\cdots+y_m$. Then 
\[
\|x-z_m\|_X=\|x_m\|_X<(\rho^{1/p})^{m+1}\|x\|_X \to 0 \text{ for } m\to\infty.
\]
and $x$ is the limit of $z_m$ in the norm of $X$.
On the other hand, if $n>m$, then 
\[
\|z_n-z_m\|_Y^p=\|y_{m+1}+\cdots+y_n\|_{Y}^p\leq \sum_{k=m+1}^n \|y_k\|_Y^p \leq t_0^{-p}\|x\|_X^p\sum_{k=m+1}^n \rho^{k+1},
\]
which converges to $0$ for $n,m\to\infty$. Hence $\{z_m\}$ is a Cauchy sequence in $Y$ and its limit belongs to $Y$ since $Y$ is topologically complete. This implies $x\in Y$, which contradicts our assumptions. Thus, we have demonstrated, for $p$-normed quasi-Banach spaces $X$ and $Y$ satisfying $Y\hookrightarrow X$, that  if $K(S(X),t_0,X,Y)<1$ for a certain $t_0>0$, then $X=Y$. In particular, if $X\neq Y$ then $K(S(X),t,X,Y)=1$ for all $t>0$. 
\end{proof}

\begin{corollary} Let $(X,Y)$ be an ordered couple of quasi-Banach spaces, $Y\hookrightarrow X$, $Y\neq X$. Then there exist a constant  
$\delta>0$; a sequence $\{b_n\}\subset [0,\infty)$ with $ (b_n) \to \infty$ as $n \to \infty$ and a sequence $\{x_n\}\subseteq S(X)$ such that for all $n\in\mathbb{N}$ and $y\in Y$ 
 $$\|x_n-y\|_X<\delta \Rightarrow \|y\|_Y>b_n. $$  
\end{corollary}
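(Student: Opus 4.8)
The plan is to recognize this corollary as nothing more than the conjunction of Theorem~\ref{fundamental}$(a)$ and the implication $(a)\Rightarrow(b)$ of Theorem~\ref{dos}, so the proof is a short assembly of results already established. First I would observe that the hypotheses of the corollary — $X$ and $Y$ quasi-Banach, $Y\hookrightarrow X$, $Y\neq X$ — are exactly those under which Theorem~\ref{fundamental}$(a)$ applies; hence there is a constant $c>0$ with $K(S(X),t,X,Y)>c$ for all $t>0$. This is precisely statement $(a)$ of Theorem~\ref{dos}.

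Second, I would feed this into Theorem~\ref{dos}: the implication $(a)\Rightarrow(b)$ provides a constant $\delta>0$ together with sequences $\{x_n\}\subset S(X)$ and $\{b_n\}\subset\,]0,\infty[$ with $b_n\to\infty$ such that, for every $n\in\mathbb{N}$, the conditions $\|x_n-y\|_X<\delta$ and $y\in Y$ force $\|y\|_Y\geq b_n$. (Inspecting the proof of Theorem~\ref{dos}, one could in fact take $\delta=c/2$ and $b_n=c/(2t_n)$ for any decreasing null sequence $\{t_n\}$, but this explicit choice is not needed.)

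Finally, to match the exact wording of the corollary I would pass from the non-strict inequality $\|y\|_Y\geq b_n$ to the strict one $\|y\|_Y>b_n$ by replacing the sequence $\{b_n\}$ with $\{b_n/2\}$, which still lies in $[0,\infty)$ and still diverges to $\infty$; since $\|y\|_Y\geq b_n>b_n/2$ whenever $b_n>0$, the required implication holds verbatim with the new sequence. There is no genuine obstacle here: all the mathematical content has already been carried out in Theorems~\ref{fundamental} and \ref{dos}, and this corollary merely repackages it; the only point calling for the slightest care is this cosmetic passage from $\geq$ to $>$ (and the harmless enlargement of the range of $\{b_n\}$ from $]0,\infty[$ to $[0,\infty)$).
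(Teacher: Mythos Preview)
Your proposal is correct and follows exactly the paper's own argument, which simply says the corollary is a direct application of Theorems~\ref{fundamental} and~\ref{dos}. Your additional remark about adjusting $\{b_n\}$ to obtain the strict inequality is a harmless cosmetic point that the paper does not bother to mention.
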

\begin{proof} The result is a direct application of Theorems \ref{fundamental} and \ref{dos}. 
\end{proof}

Theorem \ref{fundamental} has some interesting consequences on the theory of interpolation spaces. Concretely, if $X$, $Y$ are quasi-Banach spaces and $Y\hookrightarrow X$, $Y\neq X$, then all interpolation spaces $(X,Y)_{\theta,q}$ are strictly embedded into $X$ and, if $Y$ is not closed in $X$, they strictly contain $Y$. Moreover, the inclusions we get by applying the reiteration theorem are also strict. This is stated in the following theorems:

\begin{theorem}\label{strict} Let $(X,Y)$ be a couple of quasi-Banach spaces, $Y\hookrightarrow X$, $Y\neq X$ and $Y$ not closed in $X$. Then 
$$Y\hookrightarrow (X,Y)_{\theta,q}\hookrightarrow X, \text{ with strict inclusions, for } 0<\theta<1\text{ and }0<q\leq \infty.$$ 
\end{theorem}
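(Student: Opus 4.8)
The plan is to prove Theorem~\ref{strict} in two parts: the embeddings $Y \hookrightarrow (X,Y)_{\theta,q} \hookrightarrow X$ are standard facts from interpolation theory and hold without any of the extra hypotheses, so the real content is the strictness of each inclusion. For the right-hand inclusion $(X,Y)_{\theta,q} \hookrightarrow X$, I would argue that it is strict precisely because the $K$-functional of the couple decays slowly. Indeed, by Theorem~\ref{fundamental}(a), since $Y \hookrightarrow X$ with $Y \neq X$, we have $K(S(X),t,X,Y) > c$ for all $t > 0$ and some $c>0$; equivalently, by Theorem~\ref{uno}, for the non-increasing sequences $\varepsilon_n = t_n = 2^{-n}$ there exists $x \in X$ with $K(x,2^{-n},X,Y) \neq \mathbf{O}(2^{-n})$. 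Such an $x$ cannot lie in $(X,Y)_{\theta,q}$: using the discrete renorming $\|x\|_{\theta,q} = \|\{2^{\theta k}K(x,2^{-k},X,Y)\}_k\|_{\ell_q}$, membership would force in particular $2^{\theta k}K(x,2^{-k},X,Y) \to 0$, hence $K(x,2^{-k},X,Y) = \mathbf{O}(2^{-\theta k})$ and a fortiori, since $\theta < 1$ and $K(x,\cdot)$ is non-decreasing, $K(x,2^{-k},X,Y) = \mathbf{O}(2^{-k})$ is \emph{not} what we need --- rather I should be more careful and pick the witness sequence adapted to $\theta$. The clean fix: apply Theorem~\ref{uno}(b) with $t_n = 2^{-n}$ and $\varepsilon_n = 2^{-\theta n}$ (both non-increasing and in $c_0$ since $0<\theta<1$), obtaining $x\in X$ with $K(x,2^{-n},X,Y) \neq \mathbf{O}(2^{-\theta n})$, which by the discrete formula means $x \notin (X,Y)_{\theta,q}$ already for $q = \infty$, hence for every $q \le \infty$ since the spaces are nested in $q$. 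This gives $(X,Y)_{\theta,q} \subsetneq X$.

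For the left-hand inclusion $Y \hookrightarrow (X,Y)_{\theta,q}$, I would exploit the hypothesis that $Y$ is not closed in $X$. Pick $x \in \overline{Y}^{X} \setminus Y$, so there is a sequence $y_k \in Y$ with $\|x - y_k\|_X \to 0$. Then for each $t$, $K(x,t,X,Y) \le \|x-y_k\|_X + t\|y_k\|_Y$, and choosing $y_k$ appropriately (e.g.\ a subsequence along which $\|x-y_k\|_X$ decays fast enough relative to the growth of $\|y_k\|_Y$) one shows $K(x,t,X,Y) \to 0$ as $t \to 0^+$ fast enough that $\{2^{\theta k}K(x,2^{-k},X,Y)\}_k \in \ell_q$; concretely, given any prescribed summable (or $c_0$, for $q=\infty$) target rate, one can interleave to make $K(x,2^{-k},X,Y)$ beat $2^{-\theta k}$ times that rate, so $x \in (X,Y)_{\theta,q}$. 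Since $x \notin Y$, the inclusion $Y \subsetneq (X,Y)_{\theta,q}$ is strict. I should double-check that one genuinely has freedom to force arbitrarily fast decay of $K(x,2^{-k})$: from $\|x-y_k\|_X \to 0$, for each $k$ choose $y_{n(k)}$ with $\|x - y_{n(k)}\|_X < \eta_k$ for a rapidly decreasing $\eta_k$, and then $K(x,2^{-k},X,Y) \le \eta_k + 2^{-k}\|y_{n(k)}\|_Y$; since $\|y_{n(k)}\|_Y$ is some fixed (possibly large) number once $k$ is fixed, and we are free to first fix $k$ and then pick $\eta_k$, a diagonal choice makes the right side as small as we like --- more precisely, enumerate and re-index so that $K(x,2^{-k},X,Y)$ is dominated by any preassigned sequence tending to $0$.

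The main obstacle, as the above parenthetical wavering indicates, is the left-hand strictness: one must be careful that the construction of $x \in (X,Y)_{\theta,q} \setminus Y$ really works for \emph{all} $0 < q \le \infty$ simultaneously, and in the quasi-Banach setting where $K(x,\cdot,X,Y)$ need not be concave. The cleanest route is to prove it for $q = \infty$ (which only requires $K(x,2^{-k},X,Y) = o(2^{-\theta k})$, or even $O(2^{-\theta k})$ after a slight adjustment of $\theta$) and note that $(X,Y)_{\theta,q_1} \hookrightarrow (X,Y)_{\theta,q_2}$ for $q_1 \le q_2$ does \emph{not} help here since we want small $q$; so instead I would directly arrange rapid decay: choose $y_{n(k)}$ so that $K(x,2^{-k},X,Y) \le 4^{-k}$, say, which is possible because for each $k$ we may shrink $\|x - y_{n(k)}\|_X$ below $4^{-k}$ and then, \emph{re-indexing} $k \mapsto$ a sparse subsequence $k_j$, absorb the term $2^{-k_j}\|y_{n(j)}\|_Y$ by taking $k_j$ large; the resulting $x$ then has $\{2^{\theta k}K(x,2^{-k},X,Y)\}_k \in \ell_q$ for every $q>0$. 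This handles all $q$ at once and completes the proof. I would also remark that when $Y$ \emph{is} closed in $X$ the left inclusion can fail to be strict (e.g.\ $(X,Y)_{\theta,q} = Y$ can occur), which is why that hypothesis is imposed.
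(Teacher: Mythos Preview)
Your argument for the strictness of $(X,Y)_{\theta,q}\hookrightarrow X$ is correct and is exactly the paper's approach: invoke Theorem~\ref{fundamental} and then Theorem~\ref{uno}(b) with $t_n=2^{-n}$ and $\varepsilon_n=2^{-\theta n}$ to produce $x\in X$ with $\sup_k 2^{\theta k}K(x,2^{-k},X,Y)=\infty$, whence $x\notin (X,Y)_{\theta,\infty}\supseteq (X,Y)_{\theta,q}$.

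The left-hand strictness, however, has a genuine gap. Once you fix $x\in\overline{Y}^{\,X}\setminus Y$, the function $t\mapsto K(x,t,X,Y)$ is a \emph{fixed} object; no ``diagonal choice'' or ``re-indexing'' can make it decay faster than it does. In your bound $K(x,2^{-k},X,Y)\le \eta_k+2^{-k}\|y_{n(k)}\|_Y$, as you shrink $\eta_k$ the norm $\|y_{n(k)}\|_Y$ will typically blow up (it must, precisely because $x\notin Y$), so the second term is \emph{not} under your control for a given $k$. Controlling $K(x,2^{-k_j})$ only along a sparse subsequence $\{k_j\}$ does not help either: the discrete norm needs all $k$, and monotonicity of $K$ only transfers the bound at $k_j$ to the possibly much larger values at $k<k_j$. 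In fact the claim is false in general: when $Y$ is dense in $X$ (say $X=C[0,1]$, $Y=C^1[0,1]$) one has $\overline{Y}^{\,X}\setminus Y=X\setminus Y$, which is strictly larger than $(X,Y)_{\theta,q}\setminus Y$; most limit points of $Y$ do \emph{not} lie in $(X,Y)_{\theta,q}$.

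The paper avoids constructing an explicit element of $(X,Y)_{\theta,q}\setminus Y$ altogether. Instead it shows the two norms are not equivalent on $Y$: since $Y$ is not closed in $X$ and both are quasi-Banach, the norms $\|\cdot\|_X$ and $\|\cdot\|_Y$ are inequivalent on $Y$, so there exist $y_N\in S(Y)$ with $\|y_N\|_X\le 2^{-N}$. Then $K(y_N,t,X,Y)\le\min(2^{-N},t)$, and a direct computation gives $\|y_N\|_{\theta,q}\to 0$ as $N\to\infty$, so the embedding $Y\hookrightarrow (X,Y)_{\theta,q}$ is not bounded below. If it were onto, the open mapping theorem (valid for quasi-Banach spaces) would force equivalence of norms, a contradiction; hence $Y\subsetneq (X,Y)_{\theta,q}$.
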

\begin{proof} 
The fact that the inclusion $(X,Y)_{\theta,q}\hookrightarrow X$ is strict follows directly from Theorems \ref{uno} and  \ref{fundamental}, since they allow us to claim that there are elements $x\in X$ such that $K(x,\frac{1}{2^k},X,Y)$ goes to zero as slow as we want, which implies that we can find $x\in X$ such that  $\|x\|_{(X,Y)_{\theta,q}}=\infty$. The inclusion $Y\hookrightarrow (X,Y)_{\theta,q}$ is also strict. This should be well known, but we include here a proof for the sake of completeness. We assume $q<\infty$, since the case $q=\infty$ admits a similar proof. The fact that $Y\hookrightarrow (X,Y)_{\theta,q}$ as soon as $\theta<1$ is a direct computation, based on the fact that, for $y\in Y$, $K(y,t,X,Y)\leq t\|y\|_Y$ for all $t>0$. 
As $Y\hookrightarrow X$ with strict inclusion, we know that $\|y\|_X\leq M\|y\|_Y$ for a certain $M>0$ and all $y\in Y$ and, moreover, there exists a sequence $\{y_n\}_{n=0}^{\infty}\subseteq S(Y)$ such that $\|y_n\|_X$ converges to $0$ when $n$ goes to infinity. In particular, for every $N_0\in\mathbb{N}$ there exists $y_{N_0}\in S(Y)$ such that $\|y_{N_0}\|_X\leq 2^{-N_0}$. This obviously implies that 
\[
K(y_{N_0},t,X,Y)\leq \left\{\begin{array}{lllll} 
\frac{1}{2^{N_0}} & \text{ if } t\geq \frac{1}{2^{N_0}} \\
t & \text{ if } t< \frac{1}{2^{N_0}} \\
\end{array} \right.
\]
Hence
\begin{eqnarray*}
\|y_{N_0}\|_{\theta,q} & = & \left[ \sum_{k=0}^{\infty} 2^{k\theta q} K(y_{N_0}, \frac{1}{2^k},X,Y)^q\right]^{\frac{1}{q}} \\
 & \leq & \left[ \sum_{k=0}^{N_0} 2^{k\theta q} 2^{-N_0q}+ \sum_{k=N_0+1}^{\infty} 2^{(\theta-1)kq} \right]^{\frac{1}{q}}\\
 &=&  \left[ \frac{2^{\theta q (N_0+1)}-1}{2^{\theta q}-1}2^{-N_0q}+ \frac{2^{(\theta-1)q(N_0+1)}}{1-2^{(\theta-1)q}} \right]^{\frac{1}{q}}\\
 &\leq&  \left[ 2^{(\theta-1)qN_0}\frac{2^{\theta q}}{2^{\theta q}-1} + \frac{2^{(\theta-1)q(N_0+1)}}{1-2^{(\theta-1)q}} \right]^{\frac{1}{q}},\\ 
\end{eqnarray*}
which converges to $0$ when $N_0$ goes to infinity. This demonstrates that the norms $\| \cdot\| _Y$ and $\|\cdot \|_{\theta,q}$ are not equivalent on $Y$, which implies that $Y\neq (X,Y)_{\theta,q}$. 
\end{proof}
\begin{theorem}\label{interpol} Let $(X,Y)$ be a couple of quasi-Banach spaces, $Y\hookrightarrow X$, $Y\neq X$. Then 
\begin{itemize}
\item[$(a)$] Assume that $0<\theta_0, \theta_1<1$, $\theta_0\neq \theta_1$, and $0<p,q\leq \infty$. Then  
 $$(X,Y)_{\theta_0,p}\neq  (X,Y)_{\theta_1,q}$$
\item[$(b)$] Let $\theta\in ]0,1[$ and assume that $0<p,q\leq \infty$ are such that $(X,Y)_{\theta,p}\neq  (X,Y)_{\theta,q}$ and that $r_1,r_2\in [p,q]$, $r_1\neq r_2$. Then $$(X,Y)_{\theta,r_1}\neq (X,Y)_{\theta,r_2}$$
\item[$(c)$] If $0<\theta<1$ and $0<p\leq \infty$,  then $(X,Y)_{\theta,p}$ is an infinite-codimensional subspace of $X$ and $Y$ is an infinite-codimensional subspace of $(X,Y)_{\theta,p}$. 
\end{itemize}
\end{theorem}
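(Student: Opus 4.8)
The plan is to derive all three parts from the slow-decay results of Section~2 together with Theorem~\ref{fundamental}, which we may invoke freely for the couple $(X,Y)$. The unifying idea is that Theorem~\ref{fundamental} guarantees elements of $X$ whose $K$-functional $K(x,1/2^k,X,Y)$ decays \emph{as slowly as we wish}, and dually (via Theorem~\ref{strict}) the presence of elements of $Y$ whose $X$-norm decays as fast as we wish; constructing an element with a \emph{prescribed} decay rate of $K(x,1/2^k,X,Y)$ is enough to separate two interpolation norms. Throughout I would use the discrete renorming $\|x\|_{\theta,q}=\|\{2^{\theta k}K(x,1/2^k,X,Y)\}_k\|_{\ell_q}$, and the elementary facts that $t\mapsto K(x,t,X,Y)$ is non-decreasing and $t\mapsto K(x,t,X,Y)/t$ is non-increasing, so the sequence $a_k:=K(x,1/2^k,X,Y)$ is non-increasing with $a_k/a_{k+1}\leq 2$.

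For part~(a), suppose $\theta_0<\theta_1$. By Theorem~\ref{uno} applied with $t_k=1/2^k$ and $\varepsilon_k=2^{-\theta_1 k}$, there is $x\in X$ with $K(x,1/2^k,X,Y)\neq\mathbf{O}(2^{-\theta_1 k})$; then $\|x\|_{\theta_1,q}=\infty$ for every $q$ because the terms $2^{\theta_1 k}K(x,1/2^k,X,Y)$ do not even tend to zero, yet the monotonicity $a_k\leq 2^{k-j}a_j$ for $k\ge j$ forces $a_k=\mathbf{O}(2^{-\theta_0 k}\cdot 2^{(\theta_1-\theta_0)k})$... so a cleaner route is needed: instead choose $x$ so that along a subsequence $k_j$ one has $a_{k_j}\ge 2^{-\theta_1 k_j}$ while globally $a_k\le C\,2^{-\theta_0' k}$ for some $\theta_0<\theta_0'<\theta_1$, which one engineers by the Baire-category construction inside Theorem~\ref{uno} (run the argument with the two-parameter sequence pair). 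Then $x\in(X,Y)_{\theta_0,p}$ (the tail is summable against $2^{\theta_0 k}$) but $x\notin(X,Y)_{\theta_1,q}$. Symmetrically one produces $x'\in(X,Y)_{\theta_1,q}\setminus(X,Y)_{\theta_0,p}$ using an element of $Y$ from Theorem~\ref{strict}, whose $K$-functional is bounded by $t\|y\|_Y$ and by $\|y\|_X$, hence decays geometrically fast; scaling a sequence of such elements gives membership in the $\theta_1$-space but not the $\theta_0$-space.

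For part~(b), with $p<r_1<r_2<q$ (or the reversed inequalities) and $(X,Y)_{\theta,p}\neq(X,Y)_{\theta,q}$, the key observation is that the quotient space witnessing $(X,Y)_{\theta,p}\subsetneq(X,Y)_{\theta,q}$ provides a non-increasing sequence $a_k$ with $\{2^{\theta k}a_k\}\in\ell_q\setminus\ell_p$; by the $\ell_r$-interpolation scale for sequence spaces one can then modify it (or take a suitable direct-sum-type combination of the $X$-elements realizing such sequences, which is legitimate because $K$ is subadditive up to the quasi-norm constant and the construction of Theorem~\ref{uno} is robust under this) to obtain an $x$ with $\{2^{\theta k}K(x,1/2^k,X,Y)\}\in\ell_{r_2}\setminus\ell_{r_1}$. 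Part~(c) follows by iterating: given any finite-dimensional $F\subseteq(X,Y)_{\theta,p}$, the argument of part~(a) (or of Theorem~\ref{strict}) applied to a complementary direction produces $x\in X\setminus(X,Y)_{\theta,p}$, and since this can be repeated after modding out by $\span(F\cup\{x\})$ — the slow-decay conclusion of Theorem~\ref{fundamental} survives passage to any finite-codimensional closed subspace — the codimension is infinite; likewise for $Y$ inside $(X,Y)_{\theta,p}$, using the sequence $\{y_n\}$ of Theorem~\ref{strict}.

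The main obstacle, and the step I would spend the most care on, is the passage from "$K$-functional not $\mathbf{O}(\varepsilon_n)$" to "$K$-functional has a \emph{prescribed intermediate} decay rate": Theorem~\ref{uno} only yields non-$\mathbf{O}$ behavior, whereas parts~(a) and (b) need an element sitting cleanly in one Lorentz-type level and outside another. I expect this to be handled by rerunning the Baire-category argument of Theorem~\ref{uno} with a cleverly chosen pair of sequences (so that the element produced is simultaneously controlled from above by the slow rate and fails the faster rate along a subsequence), or equivalently by a gliding-hump/direct-sum construction assembling countably many of the elements furnished by Theorem~\ref{fundamental}; verifying that the quasi-norm constant of $X$ does not destroy the estimates in such an assembly is the one genuinely technical point.
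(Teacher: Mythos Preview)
Your approach has a real gap, and it is precisely the one you yourself flag as ``the main obstacle'': Theorem~\ref{uno} and Theorem~\ref{fundamental} only give you elements $x\in X$ whose $K$-functional fails a \emph{lower} bound, i.e.\ $K(x,t_n,X,Y)\neq\mathbf{O}(\varepsilon_n)$. They give no upper control whatsoever, so there is no reason the element you produce should lie in $(X,Y)_{\theta_0,p}$ (or in any interpolation space at all). Your suggested fix --- rerunning the Baire argument with a two-parameter family, or a gliding-hump assembly --- is not justified: the Baire argument in Theorem~\ref{uno} is a pure existence argument and produces no simultaneous upper bound, and a gliding-hump construction in a general quasi-Banach space with no basis or unconditional structure is far from routine. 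So as written, parts~(a) and~(b) are not proved.

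The paper avoids this entirely by a much shorter route: \emph{reiteration}. For~(a), if $\theta_1<\theta_0$ write $\theta_1=\alpha\theta_0$ and use
\[
(X,Y)_{\theta_1,q}=(X,(X,Y)_{\theta_0,p})_{\alpha,q}.
\]
Now $(X,Y)_{\theta_0,p}$ is a quasi-Banach space strictly embedded in $X$ (Theorem~\ref{strict}), so Theorem~\ref{fundamental} applies to the new couple $(X,(X,Y)_{\theta_0,p})$ and shows that the right-hand side is strictly between $(X,Y)_{\theta_0,p}$ and $X$; in particular it differs from $(X,Y)_{\theta_0,p}$. For~(b) one uses the other reiteration formula
\[
((X,Y)_{\theta,p},(X,Y)_{\theta,q})_{\eta,r}=(X,Y)_{\theta,r},\qquad \tfrac{1}{r}=\tfrac{1-\eta}{p}+\tfrac{\eta}{q},
\]
and applies Theorem~\ref{fundamental} to the couple $((X,Y)_{\theta,p},(X,Y)_{\theta,q})$. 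Part~(c) then falls out of~(a) and Theorem~\ref{strict}: between $Y$ and $X$ there is a strictly monotone continuum of spaces $(X,Y)_{\theta',q}$, which rules out finite codimension. No prescribed-rate construction is needed anywhere.
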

\begin{proof}
Part $(a)$ follows from Theorem \ref{fundamental} and the fact that $X$ is a space of class $\mathcal{C}(0,(X,Y))$, which implies that we can use the following reiteration formula:
\begin{equation}\label{reiteX}
(X,(X,Y)_{\theta,p})_{\alpha,q}=(X,Y)_{\alpha\theta,q}
\end{equation}
Indeed, if $0<\theta_1<\theta_0<1$, then $\theta_1=\alpha\theta_0$ for a certain $\alpha\in]0,1[$, which implies that 
\begin{equation}\label{reit2}
(X,Y)_{\theta_1,q}=(X,Y)_{\alpha\theta_0,q}=(X,(X,Y)_{\theta_0,p})_{\alpha,q}
\end{equation}
Thus, Theorem \ref{fundamental} guarantees that $(X,Y)_{\theta_0,p}$ is (a quasi-Banach space) strictly embedded into $X$, so that applying the very same result to the couple $(X,(X,Y)_{\theta_0,p})$, and formula \eqref{reit2}, we get that $(X,Y)_{\theta_1,q}$ is strictly embedded between $(X,Y)_{\theta_0,p}$ and $X$. In particular, $$(X,Y)_{\theta_1,q}\neq (X,Y)_{\theta_0,p} .$$
Let us assume $p>q>0$. To demonstrate part $(b)$ we use the reiteration formula
\begin{equation}\label{reit3}
((X,Y)_{\theta,p},(X,Y)_{\theta,q})_{\eta,r}=(X,Y)_{\theta, r},\text{ where } \frac{1}{r}=\frac{1-\eta}{p}+\frac{\eta}{q},
\end{equation}
which, in conjunction with Theorem \ref{fundamental}, implies that, if $(X,Y)_{\theta,p}\neq  (X,Y)_{\theta,q}$  then all interpolation spaces $(X,Y)_{\theta, r}$ with $r\in ]p,q[$ are strictly embedded between the spaces $(X,Y)_{\theta,p}$ and  $(X,Y)_{\theta,q}$ (note that, if $p>q$, then $(X,Y)_{\theta,q}\hookrightarrow (X,Y)_{\theta,p}$ since $\ell_q\hookrightarrow \ell_p$). This is so because   $\frac{1}{r}=\frac{1-\eta}{p}+\frac{\eta}{q}$ with $0\leq\eta\leq1$ is another way to claim that $\frac{1}{r} \in [\frac{1}{p},\frac{1}{q}]$ which is equivalent to $r\in [q,p]$. Thus, if $r_1,r_2\in ]q,p[$, $r_1<r_2$, then $(X,Y)_{\theta, r_1}$ is strictly embedded in $(X,Y)_{\theta,p}$ and  henceforth $(X,Y)_{\theta,r_2}$ is strictly embedded between the spaces $(X,Y)_{\theta,p}$ and  $(X,Y)_{\theta,r_1}$, since $r_1<r_2<p$. In particular,  $(X,Y)_{\theta, r_1}\neq  (X,Y)_{\theta, r_2}$.

Part $(c)$ follows directly from $(a)$ and Theorem \ref{strict}.

\end{proof}
\begin{remark}
 Observe that statement $(b)$ of Theorem \ref{interpol}  also follows from part  $(a)$ and formula \eqref{reit3}, since choosing two distinct $r_1,r_2$ in $]p,q[$ is the same as choosing two distinct $\eta_1,\eta_2$ in the lefthand side of  formula \eqref{reit3}. 
\end{remark}

In the case we consider a couple $(X,Y)$ where $(X,\|\cdot\|_X)$ is a quasi-Banach space and $Y\subset X$ is a quasi-semi-normed space $(Y, \|\cdot\|_Y)$ which is continuously included into $X$, we can no longer guarantee  the slow decay of the associated $K$-functional. For example, in \cite{Ditz} it was proved that, if $0<p<1$ and $r\in\mathbb{N}$ is a positive natural number, then $K(f,t,L_p,W_p^r)=0$ for all $f\in L_p[0,1]$, when we deal the Sobolev space $W_p^r$ with the quasi-seminorm $\|g\|_{p,r}=\|g^{(r)}\|_{L_p[0,1]}$, so that
\[
K(f,t,L_p,W_p^r)=\inf_{g^{(r-1)} \text{ is absolutely continuous}}\|f-g\|_{L_p[0,1]}+t\|g^{(r)}\|_{L_p[0,1]}.
\]
In spite of this, we can use  Theorem \ref{fundamental} to prove the following:
\begin{theorem} Assume that  $(X,Y)$ is a couple, where $(X,\|\cdot\|_X)$ is a quasi-Banach space and $Y\subset X$ is a quasi-semi-normed space $(Y, \|\cdot\|_Y)$ which is continuously embedded into $X$. If the interpolation space $(X,Y)_{\theta_0,p_0}$ is strictly embedded into $X$ for some choice of $0<\theta_0<1$ and $0<p_0\leq \infty$, then $$K(S(X),t,X,Y)>c \text{ for all } t>0 \text{ and a certain } c>0.$$
In particular, the strict inclusion of one interpolation space $(X,Y)_{\theta_0,p_0}$ into $X$ implies that all interpolation spaces  
$(X,Y)_{\theta,p}$ are strictly embedded into $X$, where $0<\theta<1$ and $0<p\leq \infty$.
\end{theorem}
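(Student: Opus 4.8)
The plan is to interpolate \emph{through the interpolation space itself}: use $Z:=(X,Y)_{\theta_0,p_0}$ as an intermediate quasi-Banach space to which Theorem~\ref{fundamental} applies. Equip $Z$ with the quasi-norm $\rho_{\theta_0,p_0}$ (equivalently, with $\|\cdot\|_{\theta_0,p_0}$, since $Y\hookrightarrow X$). The preliminary step is to check that $(Z,\rho_{\theta_0,p_0})$ is a quasi-Banach space sitting continuously between $Y$ and $X$. That $\rho_{\theta_0,p_0}$ is a genuine quasi-norm (not merely a quasi-seminorm) is seen as follows: if $\rho_{\theta_0,p_0}(x)=0$ then $K(x,t,X,Y)=0$ for almost every $t$, hence for every $t>0$ by monotonicity in $t$; picking $y_j\in Y$ with $\|x-y_j\|_X+t\|y_j\|_Y\to0$ for one fixed $t$ gives $\|y_j\|_Y\to0$, hence $\|y_j\|_X\to0$ (because $\|\cdot\|_X\le M\|\cdot\|_Y$ on $Y$), hence $x=0$ by the quasi-triangle inequality in $X$. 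The continuous inclusion $Z\hookrightarrow X$ follows from the elementary bound $\|x\|_X\le C\,K(x,1,X,Y)$ (combine the quasi-triangle inequality in $X$ with $\|\cdot\|_X\le M\|\cdot\|_Y$), so that $\|x\|_X\le C'\,\|x\|_{\theta_0,p_0}$ for some $C'>0$. The continuous inclusion $Y\hookrightarrow Z$ follows from $K(y,t,X,Y)\le\min\{t,M\}\|y\|_Y$ together with $0<\theta_0<1$ and $p_0>0$, which make the defining integral (or supremum, when $p_0=\infty$) converge and yield $\rho_{\theta_0,p_0}(y)\le C_Z\,\|y\|_Y$. Finally, the completeness of $Z$ is the standard fact that a real interpolation space over a complete ambient space is complete: a $\rho_{\theta_0,p_0}$-Cauchy sequence is $\|\cdot\|_X$-Cauchy by the previous bound, hence converges in $X$, and a Fatou argument, using that $K(\cdot,t,X,Y)$ is lower semicontinuous up to the quasi-norm constant with respect to $\|\cdot\|_X$-convergence, shows that the $X$-limit lies in $Z$ and is the $\rho_{\theta_0,p_0}$-limit.

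Granting all this, the heart of the proof is one line. By hypothesis $Z\neq X$, so Theorem~\ref{fundamental}$(a)$, applied to the quasi-Banach couple $(X,Z)$, yields a constant $c>0$ with $K(S(X),t,X,Z)>c$ for all $t>0$. Since $Y\subseteq Z$ and $\|z\|_Z\le C_Z\|z\|_Y$ for every $z\in Y$, we have, for each $x\in S(X)$ and each $t>0$,
\[
K(x,t,X,Y)=\inf_{y\in Y}\bigl(\|x-y\|_X+t\|y\|_Y\bigr)\ \ge\ \inf_{z\in Z}\Bigl(\|x-z\|_X+\tfrac{t}{C_Z}\|z\|_Z\Bigr)=K\Bigl(x,\tfrac{t}{C_Z},X,Z\Bigr),
\]
so that $K(S(X),t,X,Y)\ge K(S(X),t/C_Z,X,Z)>c$ for all $t>0$, which is exactly the desired conclusion. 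The last sentence of the statement is then immediate from the Remark following the definition of slow decay (which applies since $Y\subseteq X$): slow decay of $K(\cdot,\cdot,X,Y)$ forces $(X,Y)_{\theta,p}$ to be properly contained in $X$ for every $0<\theta<1$ and $0<p\le\infty$.

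I expect the only genuinely delicate point to be the completeness of $Z=(X,Y)_{\theta_0,p_0}$, which has to be verified (or cited) precisely because $Y$ is only assumed quasi-seminormed and possibly non-complete; indeed the $L_p$--$W_p^r$ example recalled just before the statement shows that it is exactly the completeness of the intermediate space that can fail for $(X,Y)$ itself. Everything else reduces to bookkeeping with the quasi-norm constants of $X$ and $Y$. One could in principle bypass the black-box use of Theorem~\ref{fundamental} and instead rerun its iteration argument directly on the couple $(X,Z)$, but that argument still hinges on the completeness of $Z$, so there is no way around establishing it.
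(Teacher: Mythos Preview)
Your proof is correct and follows essentially the same approach as the paper: set $Z=(X,Y)_{\theta_0,p_0}$, apply Theorem~\ref{fundamental} to the quasi-Banach couple $(X,Z)$, then transfer the lower bound on $K(\cdot,\cdot,X,Z)$ to $K(\cdot,\cdot,X,Y)$ via the continuous embedding $Y\hookrightarrow Z$. The paper's chain $K(x_t,tM,X,Y)\ge K(x_t,t,X,Z)$ and your chain $K(x,t,X,Y)\ge K(x,t/C_Z,X,Z)$ are the same inequality with a relabelled parameter. The only substantive addition in your write-up is the explicit verification that $Z$ is a genuine quasi-Banach space (positivity of $\rho_{\theta_0,p_0}$ and completeness), which the paper tacitly assumes; your caution here is well placed, since Theorem~\ref{fundamental} does require completeness of the smaller space.
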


\begin{proof} A direct application of Theorem \ref{fundamental} to the couple of quasi-Banach spaces 
$(X,(X,Y)_{\theta_0,p_0})$ leads to 
\[
K(S(X),t,X,(X,Y)_{\theta_0,p_0})>c \text{ for all } t>0 \text{ and a certain } c>0.
\]
Thus, for each $t,\varepsilon >0$ there is $x_t\in S(X)$ such that $K(x_t,t,X,(X,Y)_{\theta_0,p_0})>(1-\varepsilon)c$. On the other hand, we have that $Y\hookrightarrow (X,Y)_{\theta_0,p_0}$, which means that $\|y\|_{\theta_0,p_0}\leq M\|y\|_Y$ for all $y\in Y$ and a certain constant $M=M(\theta_0,p_0)$. Hence
\begin{eqnarray*}
K(x_t,t M, X,Y) & =& \inf_{y\in Y} \|x_t-y\|_X+t  M \|y\|_{Y} \\
&\geq &  \inf_{y\in Y} \|x_t-y\|_X+t   \|y\|_{\theta_0,p_0} \\
&\geq &  \inf_{z\in (X,Y)_{\theta_0,p_0}} \|x_t-z\|_X+t   \|z\|_{\theta_0,p_0}  \text{ (since } Y\subseteq (X,Y)_{\theta_0,p_0} \text{ )}\\
&=& K(x_t,t,X,(X,Y)_{\theta_0,p_0}) >(1-\varepsilon)c
\end{eqnarray*}
Hence $$K(x_{\frac{t}{M}},t, X,Y) = K(x_{\frac{t}{M}},\frac{t}{M} M, X,Y)>(1-\varepsilon)c,$$
which implies that $K(S(X),t,X,Y)\geq c>0$ for all $t>0$, since $\varepsilon,t>0$  are arbitrary. This ends the proof. 
\end{proof}

\section{Spaces whose elements share a common decay in their $K$-functionals respect to a given couple}

Another interesting question is to know under which conditions on a subspace $Z$ of $X$ we have that, for each decreasing sequence $\{t_n\}$ all elements  $z\in Z$ share a common rate of convergence to zero for the sequence $\{K(z,t_n,X,Y)\}$. We give a sufficient condition in the following result:

\begin{theorem} \label{tres} Assume that $Y$ is a continuously embedded dense subspace of $X$. If $Z$ is a compactly embedded subspace of $X$, and $\{t_n\}\in c_0$ is non-increasing, then there exists a non increasing sequence $\{\varepsilon_n\}\in c_0$ such that $$K(z,t_n,X,Y)\leq \|z\|_Z\varepsilon_n \text{ for all } n\in\mathbb{N} \text{ and all }z\in Z$$  
In fact, if we set $\varphi(t)= K(S(Z),t,X,Y)$, then $\lim_{t\to 0^+}\varphi(t)=0$ and $K(z, t, X,Y) \leq \|z\|_Z \varphi(t)$ for all $z\in Z$. 
\end{theorem}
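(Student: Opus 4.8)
The plan is to reduce the theorem to the single assertion that $\varphi(t)=K(S(Z),t,X,Y)\to 0$ as $t\to 0^{+}$; everything else is a formal consequence of the homogeneity and monotonicity of the $K$-functional. (If $Z=\{0\}$ there is nothing to prove, so assume $Z\neq\{0\}$.) First I would record that $K(\lambda x,t,X,Y)=|\lambda|\,K(x,t,X,Y)$ for every scalar $\lambda$ and every $x\in X$ --- for $\lambda\neq 0$ this follows by replacing $y$ by $\lambda y$ in the defining infimum, and it is trivial for $\lambda=0$. Consequently, for $z\in Z\setminus\{0\}$,
\[
K(z,t,X,Y)=\|z\|_{Z}\,K\!\left(\frac{z}{\|z\|_{Z}},t,X,Y\right)\le \|z\|_{Z}\,\varphi(t),
\]
and the inequality is trivial for $z=0$; this is already the last displayed inequality of the statement. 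Note also that $\varphi(t)<\infty$ for every $t>0$, since $K(w,t,X,Y)\le\|w\|_{X}$ is uniformly bounded on $S(Z)$ (the embedding $Z\hookrightarrow X$ being in particular continuous), and that $\varphi$ is non-decreasing, because $t\mapsto K(x,t,X,Y)$ is non-decreasing for each fixed $x$ and a supremum of non-decreasing functions is non-decreasing.

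Granting for the moment that $\varphi(0^{+})=0$, the sequence statement follows at once: put $\varepsilon_{n}:=\varphi(t_{n})$. Since $\{t_{n}\}$ is non-increasing and $\varphi$ is non-decreasing, $\{\varepsilon_{n}\}$ is non-increasing; since $t_{n}\to 0$ and $\varphi(0^{+})=0$, we have $\{\varepsilon_{n}\}\in c_{0}$; and the inequality $K(z,t_{n},X,Y)\le\|z\|_{Z}\,\varepsilon_{n}$ for all $z\in Z$ is exactly the displayed inequality above evaluated at $t=t_{n}$.

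It remains to prove $\varphi(0^{+})=0$, and here is where the three hypotheses enter. First I would establish the quasi-Lipschitz bound
\[
K(x_{1},t,X,Y)\le C\|x_{1}-x_{2}\|_{X}+C\,K(x_{2},t,X,Y)\qquad(x_{1},x_{2}\in X,\ t>0),
\]
where $C\ge 1$ is the modulus of the quasi-norm of $X$: inside the infimum defining $K(x_{1},t,X,Y)$ one bounds $\|x_{1}-y\|_{X}\le C(\|x_{1}-x_{2}\|_{X}+\|x_{2}-y\|_{X})$, factors out $C$, rescales $t\mapsto t/C$, and uses monotonicity in $t$ together with $t/C\le t$. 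Next, density of $Y$ in $X$ gives $\lim_{t\to 0^{+}}K(x,t,X,Y)=0$ for each fixed $x\in X$. Finally, since $Z\hookrightarrow X$ is a compact embedding, $S(Z)$ is a relatively compact, hence totally bounded, subset of $X$. Fix $\delta>0$ and cover $S(Z)$ by finitely many balls $B_{X}(x_{1},\delta),\dots,B_{X}(x_{N},\delta)$ with $x_{i}\in X$. For any $z\in S(Z)$ choose $i$ with $\|z-x_{i}\|_{X}<\delta$; the quasi-Lipschitz bound gives $K(z,t,X,Y)\le C\delta+C\,K(x_{i},t,X,Y)$. By density there is $\tau>0$ with $K(x_{i},t,X,Y)<\delta$ for all $i=1,\dots,N$ whenever $0<t<\tau$, and therefore $\varphi(t)\le 2C\delta$ for $0<t<\tau$. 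Since $\delta>0$ was arbitrary and $\varphi\ge 0$, this proves $\varphi(0^{+})=0$.

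The only real care needed is in the quasi-normed bookkeeping: one must check that the constant $C$ and the $t$-rescaling in the quasi-Lipschitz estimate do not obstruct the limit (they do not, since $\delta$ is free), and that the centres $x_{i}$ of the finite cover may be taken in $X$, so that the density argument applies to them directly. Beyond that, the argument is entirely routine once the reduction via homogeneity is in hand.
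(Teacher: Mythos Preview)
Your proof is correct and follows the same overall strategy as the paper: reduce via homogeneity to showing $\varphi(t)=K(S(Z),t,X,Y)\to 0$, and then combine compactness of $S(Z)$ in $X$ with the pointwise fact (from density of $Y$) that $K(x,t,X,Y)\to 0$ for each fixed $x$. The only difference is in the execution of that last step: the paper argues by contradiction and sequential compactness (if $\varepsilon_n\not\to 0$, pick $z_n\in S(Z)$ with $K(z_n,t_n,X,Y)\ge c$, extract a subsequence converging in $X$, and split the $K$-functional), whereas you argue directly via total boundedness and a finite $\delta$-net. Both are standard ways to upgrade pointwise to uniform convergence on a relatively compact set; your version is perhaps a touch more explicit about the quasi-norm constant, while the paper's is marginally shorter.
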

\begin{proof}
Take $\varepsilon_n=K(S(Z),t_n,X,Y)$, where $S(Z)$ denotes the unit sphere in $Z$. Then 
\[
K(z,t_n,X,Y)=\|z\|_Z K(\frac{z}{\|z\|_Z},t_n,X,Y)\leq \|z\|_{Z}\varepsilon_n \text{ for all } n\in\mathbb{N},
\]
so that the result is proved as soon as we demonstrate that $\{\varepsilon_n\}\in c_0$. Now, the sequence $\{\varepsilon_n\}$ is non-increasing because of the monotony of $K$. Thus, if this sequence does not converge to $0$, is uniformly bounded by a positive constant $c>0$. In particular, there is a sequence $\{z_n\}\subseteq S(Z)$ such that $K(z_n,t_nX,Y)\geq c$ for all $n$. The compactness of the imbedding $Z\to X$ implies that there is a subsequence 
$\{z_{n_k}\}_{k=1}^{\infty}$ and a point $z\in X$ such that $\lim_{k\to\infty}\|z_{n_k}-z\|_X=0$. Hence
\[
0<c\leq K(z_{n_k},t_{n_k},X,Y)\leq C[K(z_{n_k}-z,t_{n_k},X,Y)+K(z,t_{n_k},X,Y)]\to 0,
\]
which is impossible. 
Thus, we have demonstrated that, if $\varphi(t)= K(S(Z),t,X,Y)$, then $\lim_{t\to 0^+}\varphi(t)=0$ and $K(z, t, X,Y) \leq \|z\|_Z \varphi(t)$ for all $z\in Z$. 
\end{proof}

\begin{example} If $(X,Y)$ is a couple and $Y\hookrightarrow Z\hookrightarrow X$ with $Z$ compactly embedded into $X$, then we can define the space $(Z^*,\|\cdot\|_{*})= (Z,\|\cdot \|_X)$ which results from considering on $Z$ the norm of $X$ and, thanks to Theorem \ref{tres},  the associated $K$-functional will satisfy: 
\[
K(z,t,Z^*,Y)= \inf_{y\in Y}\|z-y\|_{*}+t\|y\|_Y =  \inf_{y\in Y}\|z-y\|_{X}+t\|y\|_Y \leq \|z\|_Z \varphi(t),  
\]
where $\varphi(t)=K(S(Z),t,X,Y)$. Thus, $K(z,t,Z^*,Y)$ does not decay to zero slowly. 
\end{example}

\medskip

\bigskip

\noindent J. M.~ALMIRA\\
Departamento de Ingenier\'{\i}a y Tecnolog\'{\i}a de Computadores,  Universidad de Murcia. \\
 30100 Murcia, SPAIN\\
e-mail: \texttt{jmalmira@um.es}\\[2ex]

\noindent P.~FERN\'{A}NDEZ-MART\'{I}NEZ\\
Departamento de Matem\'{a}ticas,  Universidad de Murcia. \\
30100 Murcia, SPAIN\\
e-mail: \texttt{pedrofdz@um.es}\\[2ex]

\end{document}